\documentclass[reqno]{amsart}
\usepackage{amsmath, amssymb, amsfonts}
\newtheorem{theorem}{Theorem}

\newtheorem{corollary}{Corollary}
\newtheorem{lemma}{Lemma}

\theoremstyle{remark}
\newtheorem{remark}{Remark}
\theoremstyle{definition}
\newtheorem{define}{Definition}

\newcommand{\CC}{\mathbb C}

\begin{document}

\title[On the real-analytic infinitesimal CR automorphism]{On the real-analytic infinitesimal CR automorphism of hypersurfaces of infinite type}

\author{Ninh Van Thu, Chu Van Tiep and Mai Anh Duc}

\thanks{ The research of the authors was supported in part by an NAFOSTED grant
of Vietnam. The research of the first author was supported in part by an NRF grant 2011-0030044 (SRC-GAIA) of the Ministry of Education, The Republic of Korea.}

\address{Center for Geometry and its Applications,
 Pohang University of Science and Technology,  Pohang 790-784, The Republic of Korea}
\email{thunv@postech.ac.kr, thunv@vnu.edu.vn}

\address{School of Physical and Mathematical Sciences, Nanyang Technological University, SPMS-04-01, 21 Nanyang Link,
Singapore 637371}
\email{vantiep001@e.ntu.edu.sg}
\address{Department of Mathematics, Hanoi
National University of Education, 136 Xuan Thuy str., Hanoi, Vietnam}
\email{ducphuongma@gmail.com}
\subjclass[2000]{Primary 32M05; Secondary 32H02, 32H50, 32T25.}
\keywords{Holomorphic vector field, infinitesimal CR automorphism, real hypersurface, infinite type point.}
\begin{abstract}
We consider a real smooth hypersurface $M\subset \mathbb C^2$, which is of D'Angelo infinite type at $p\in M$. The purpose of this paper is to show that the real vector space of tangential holomorphic vector field germs at $p$ vanishing at $p$ is either trivial or of real dimension $1$. 
\end{abstract}
\maketitle

\section{Introduction}

Let $(M,p)$ be a real $\mathcal{C}^1$-smooth hypersurface germ at $p\in \mathbb C^n$. A smooth vector field germ $(X,p)$ on $M$ is called \emph{a real-analytic infinitesimal CR automorphism germ at $p$ of $M$} if there exists a holomorphic vector field germ $(H,p)$ in $\mathbb C^n$ such that $H$ is tangent to $M$, i.e. $\mathrm{Re}~H$ is tangent to $M$, and $X=\mathrm{Re}~H\mid_M$. We denote by  $\mathrm{hol}_0(M,p)$ the real vector space of holomorphic vector field germs $(H,p)$ vanishing at $p$ which are tangent to $M$.

For a real hypersurface in $\mathbb C^n$, the real-analytic infinitesimal CR automorphism is not easy to describe explicitly; besides, it is unknown in most cases. For instance, the study of $\mathrm{hol}_0(M,p)$ of various hypersurfaces is given in \cite{Baouendi1, Chern, Kolar1, Stanton1, Stanton2}. However, these results are known for Levi nondegenerate hypersurfaces or more generally for Levi degenerate hypersurfaces of finite type. For various real $\mathcal{C}^\infty$-smooth hypersurfaces of D'Angelo infinite type in $\mathbb C^2$, explicit descriptions of $\mathrm{hol}_0(M,p)$ are given in \cite{By1, Kim-Ninh, Ninh}. 

In this paper we shall prove that $\mathrm{hol}_0(M,p)$ of a certain hypersurface of D'Angelo infinite type in $\mathbb C^2$ is either trivial or of real dimension $1$. To state the result explicitly, we need some notations and a definition. Taking the risk of confusion we employ the notations
$$
P'(z)=P_z(z)=\frac{\partial P}{\partial z}(z),~ f_z(z,t)=\frac{\partial f}{\partial z}(z,t),~f_t(z,t)=\frac{\partial f}{\partial t}(z,t)
$$
throughout the article. Also denote by $\Delta_r=\{z\in \mathbb C\colon |z|<r\}$ for $r>0$ and by $\Delta=\Delta_1$. A function $f$ defined on $\Delta_r~(r>0)$ is called to be \emph{flat} at the origin if $f(z)=o(|z|^n)$ for each $n\in \mathbb N$ (cf. Definition \ref{def1}).

The aim of this paper is to prove the following theorem. 
\begin{theorem} \label{Theorem3} Let $(M,0)$ be a real $\mathcal{C}^1$-smooth hypersurface germ at $0$ defined by the equation
$\rho(z) := \rho(z_1,z_2)=\mathrm{Re}~z_1+P(z_2)+ \mathrm{Im}~z_1 Q(z_2,\mathrm{Im}~z_1)=0$
satisfying the conditions: 
\begin{itemize}
\item[(1)] $P,Q$ are $\mathcal{C}^1$-smooth with $P(0)=Q(0,0)=0$,
\item[(2)] $P(z_2)>0$ for any $z_2 \not= 0$, and 
\item[(3)] $P(z_2),P'(z_2)$ are flat at $z_2=0$.
\end{itemize}
Then $\dim_{\mathbb R} \mathrm{hol}_0(M,p)\leq 1$.
\end{theorem}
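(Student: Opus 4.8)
The plan is to convert the tangency condition into a single functional identity along $M$ and then to use the flatness of $P$ and $P'$ to rigidify the holomorphic data. First I would write a candidate field as $H=h_1(z_1,z_2)\,\partial_{z_1}+h_2(z_1,z_2)\,\partial_{z_2}$ with $h_1,h_2$ holomorphic and $h_1(0)=h_2(0)=0$. Writing $s=\mathrm{Im}\,z_1$ and differentiating $\rho$, one finds $\rho_{z_1}=\tfrac12-\tfrac i2\bigl(Q+sQ_s\bigr)$ and $\rho_{z_2}=P_{z_2}+sQ_{z_2}$, so the tangency condition $\mathrm{Re}(H\rho)\mid_M=0$ becomes
\[
\mathrm{Re}\Bigl[h_1\bigl(1-i(Q+sQ_s)\bigr)\Bigr]+2\,\mathrm{Re}\bigl[h_2\,(P_{z_2}+sQ_{z_2})\bigr]=0
\]
along $M$, where $M$ is parametrized by $(z_2,s)$ through $\mathrm{Re}\,z_1=-P(z_2)-sQ(z_2,s)$, i.e. $z_1=-P-sQ+is$. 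This identity in $(z_2,\bar z_2,s)$ is the object to analyze.

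Second, I would carry out a \emph{modulo-flat} analysis. Since $P$ and $P'=P_{z_2}$ are flat at $0$, every term containing $P_{z_2}$ is flat in $z_2$, and replacing the argument $z_1=-P-sQ+is$ by $-sQ+is$ costs only flat errors. Hence the non-flat content of the identity asserts that $\mathrm{Re}\bigl[h_1(\,\cdot\,)\,(1-i(Q+sQ_s))\bigr]$ is flat in $z_2$ for each fixed $s$. The key elementary fact is that a function holomorphic in $z_2$ whose real part is flat at $0$ must have identically vanishing real part (its real part is harmonic, hence real-analytic, hence zero), so it is a purely imaginary constant. Treating the factor $1-i(Q+sQ_s)$, which equals $1$ at the origin, as a $\mathcal{C}^1$ perturbation and matching the lowest order, this forces $h_1(0,z_2)\equiv0$ and, on letting $s$ vary, that $h_1$ depends only on $z_1$ and maps the imaginary axis into itself; in particular $h_1=h_1(z_1)$ with $h_1(i\mathbb R)\subseteq i\mathbb R$ and $h_1(0)=0$.

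Third — and this is the heart of the matter — I would analyze the genuinely flat part of the identity. Substituting the reduced form of $h_1$ and Taylor-expanding $h_1(-P-sQ+is)$ in powers of the flat quantity $P$ about $is$, one compares the flat contributions produced by $h_1$ against the flat term $2\,\mathrm{Re}[h_2(P_{z_2}+sQ_{z_2})]$. Here the positivity $P(z_2)>0$ for $z_2\neq0$ and the relation between $P$ and $P'$ are essential: they rule out spurious cancellations and allow one to compare the magnitudes of the competing flat quantities. I expect this to force all but one real degree of freedom to vanish — rigidifying $h_1$ and then determining $h_2$ accordingly — so that the assignment $H\mapsto\ell(H)$ recording the single surviving real coefficient is injective, whence $\dim_{\mathbb R}\mathrm{hol}_0(M,0)\le1$.

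The main obstacle is precisely this last, flat, step. Because $P,P'$ and $Q$ are only $\mathcal{C}^1$ and $P,P'$ are non-analytic, one cannot match Taylor coefficients in the usual way: the entire arithmetic of the problem lives in the flat remainder that ordinary jet comparison discards. The difficulty is therefore to show that these non-analytic flat contributions nevertheless rigidify the holomorphic functions $h_1,h_2$, and the crux is to extract that rigidity from positivity of $P$ together with the flatness of both $P$ and $P'$, rather than from formal power-series matching.
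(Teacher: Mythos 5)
Your proposal reproduces the paper's setup faithfully (the tangency identity parametrized by $(z_2,s)$ with $z_1=is-sQ-P$, and the endgame of reducing to a single surviving real parameter, which is exactly how the paper deduces $\dim_{\mathbb R}\mathrm{hol}_0(M,0)\le 1$ from its Theorem \ref{Th1}), but it stops at the decisive point. Your third step --- showing that the flat contributions of $h_1$ and of $2\,\mathrm{Re}[h_2(P_{z_2}+sQ_{z_2})]$ rigidify the Taylor coefficients --- is stated as an expectation (``I expect this to force all but one real degree of freedom to vanish'') and then explicitly flagged as the main obstacle, with no mechanism offered to overcome it. That mechanism is the entire mathematical content of the theorem. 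The paper supplies it as Lemma \ref{Al3}: assuming a relation of the form $\mathrm{Re}\big[az^m+P^{-n}\big(bz^\ell(1+g_0)P'/P+g_1\big)\big]=g_2$ with $a,b\ne 0$, one integrates the planar ODE $\dot z=bz^\ell(1+g_0(z))$, tracks $u(t)=\tfrac12\log P(\gamma(t))$ along a trajectory $\gamma(t)\to 0$, and shows that the relation forces $|u(t)|\lesssim \log(1/|\gamma(t)|)$ (or forces $u(t)\to+\infty$), hence $P(\gamma(t))\ge|\gamma(t)|^{2A}$, contradicting flatness of $P$. Nothing of this dynamical/flow idea, nor any substitute for it, appears in your proposal; positivity of $P$ and flatness of $P$ and $P'$ are invoked only as slogans (``rule out spurious cancellations'') rather than through an actual argument. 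After this lemma the paper's Case 1 also needs the substitution $t=\alpha P(z_2)$ to isolate the competing lowest-order terms of $h_1$ and $h_2$ on a common footing --- another concrete device your sketch does not anticipate.

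There is also a problem in your second step. The ``modulo-flat'' reduction via harmonicity (real part of a holomorphic function flat at $0$ implies it vanishes) is legitimate only for functions holomorphic in $z_2$, but the quantities you apply it to are not: the factor $1-i(Q+sQ_s)$ is merely $\mathcal{C}^1$, and for $s\ne 0$ the argument $z_1=is-sQ(z_2,s)-P(z_2)$ depends on $z_2$ non-holomorphically through $Q$. Matching the lowest-order jet does give $h_1(0,z_2)\equiv 0$, but the stronger conclusions you draw --- that $h_1$ depends only on $z_1$ and maps $i\mathbb{R}$ into itself --- do not follow from this argument and are not what the paper proves; and even if granted, the condition $h_1(i\mathbb{R})\subseteq i\mathbb{R}$ leaves infinitely many real parameters, so it does not advance the rigidification. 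The proposal is therefore an accurate description of the problem, not a proof of the theorem.
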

\begin{remark} When $P,Q$ are $\mathcal{C}^\infty$-smooth, the condition $(3)$ simply says that $P$ vanishes to infinite order at $0$ and moreover $0$ is a point of D'Angelo infinite type.

\end{remark}
In the case $M$ is \emph{radially symmetric in $z_2$}, i.e. $P(z_2)=P(|z_2|)$ and $Q(z_2,t)=Q(|z_2|,t)$ for any $z_2$ and $t$, it is well-known that $iz_2\frac{\partial}{\partial z_2}$ is tangent to $M$ (see cf. \cite{By1}). Therefore, by Theorem \ref{Theorem3} one gets the following corollary, which is a slight generalization of the main result in \cite{By1}. 
\begin{corollary} Let $(M,0)$ be a real $\mathcal{C}^1$-smooth hypersurface germ at $0$ defined by the equation
$\rho(z) := \rho(z_1,z_2)=\mathrm{Re}~z_1+P(z_2)+ \mathrm{Im}~z_1 Q(z_2,\mathrm{Im}~z_1)=0$
satisfying the conditions: 
\begin{itemize}
\item[(1)] $P,Q$ are $\mathcal{C}^1$-smooth with $P(0)=Q(0,0)=0$,
\item[(2)] $P(z_2)=P(|z_2|),~Q(z_2,t)=Q(|z_2|,t)$ for any $z_2$ and $t$,
\item[(3)] $P(z_2)>0$ for any $z_2 \not= 0$, and 
\item[(4)] $P(z_2),P'(z_2)$ are flat at $z_2=0$.
\end{itemize} 
Then $\mathrm{hol}_0(M,0)=\{i\beta z_2\frac{\partial}{\partial z_2}\colon \beta\in \mathbb R\}$.
\end{corollary}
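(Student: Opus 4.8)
The plan is to prove Theorem~\ref{Theorem3} by analyzing the tangency condition for a holomorphic vector field germ directly. Let $(H,0)\in\mathrm{hol}_0(M,0)$ be written as $H = h_1(z_1,z_2)\frac{\partial}{\partial z_1} + h_2(z_1,z_2)\frac{\partial}{\partial z_2}$, where $h_1,h_2$ are holomorphic germs vanishing at the origin. The requirement that $\mathrm{Re}\,H$ be tangent to $M$ means that $(\mathrm{Re}\,H)\rho$ vanishes identically on $M=\{\rho=0\}$; equivalently there is a real-valued function $\lambda$ with
\begin{equation*}
\mathrm{Re}\paren{h_1\,\rho_{z_1} + h_2\,\rho_{z_2}} = \lambda\,\rho .
\end{equation*}
Substituting the defining equation $\rho = \mathrm{Re}\,z_1 + P(z_2) + \mathrm{Im}\,z_1\,Q(z_2,\mathrm{Im}\,z_1)$ and collecting terms, the first step is to expand this identity along $M$ by parametrizing $M$ via $z_2$ and $\mathrm{Im}\,z_1 =: t$, using $\mathrm{Re}\,z_1 = -P(z_2) - tQ(z_2,t)$ to eliminate $\mathrm{Re}\,z_1$.

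Next I would exploit the flatness hypotheses. The key structural feature is that $P$ and $P'$ vanish to infinite order at $z_2=0$, so any holomorphic (hence finite-order, or at worst convergent power series) contribution to $h_1,h_2$ cannot be balanced against $P$ except through terms that themselves carry the flat factor. Concretely, I would expand $h_1$ and $h_2$ as power series in $z_2$ with coefficients holomorphic in $z_1$, set $z_1$ on the slice $\mathrm{Re}\,z_1 = -P - tQ$, and compare orders in $z_2$. Because $P$ is flat, the polynomial-in-$z_2$ part of the tangency identity must vanish on its own, which should force $h_1$ to be tightly constrained (I expect $h_1$ to be determined by, or forced to vanish to high order against, the structure of $P$) and should force $h_2$ to have no constant-in-$z_2$ term and to be linear in $z_2$ up to flat corrections. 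The radial symmetry is not assumed in Theorem~\ref{Theorem3}, so the argument must be purely asymptotic: I would extract the leading behavior of $h_2(0,z_2)$ and show the admissible vector fields form a real vector space whose only free real parameter is the coefficient $\beta$ of the rotational field $iz_2\frac{\partial}{\partial z_2}$, giving $\dim_{\mathbb R}\mathrm{hol}_0(M,0)\le 1$.

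For the corollary, radial symmetry guarantees that $iz_2\frac{\partial}{\partial z_2}$ is genuinely tangent to $M$ (as recalled in the excerpt, cf.~\cite{By1}), so $\mathrm{hol}_0(M,0)$ is nontrivial and contains $\{i\beta z_2\frac{\partial}{\partial z_2}:\beta\in\mathbb R\}$. Combined with the dimension bound $\dim_{\mathbb R}\mathrm{hol}_0(M,0)\le 1$ from Theorem~\ref{Theorem3}, this one-dimensional subspace must be all of $\mathrm{hol}_0(M,0)$, yielding the stated equality.

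The hard part will be making the order-comparison rigorous when $P$ is only $\mathcal{C}^1$ rather than smooth: without a Taylor expansion of $P$ one cannot literally separate "polynomial" and "flat" parts, so the flatness of $P$ and $P'$ must be used through the asymptotic bounds $P(z_2)=o(|z_2|^n)$ and $P'(z_2)=o(|z_2|^n)$ directly, and I would need to show that a holomorphic $h_2$ whose lowest-order term is not a pure multiple of $z_2$ (and any nonzero $h_1$) would produce, after substitution into the tangency identity, a term that fails to be $o(|z_2|^n)$ for some $n$, contradicting the flat right-hand side. Controlling the mixed terms involving $Q$ and $\mathrm{Im}\,z_1$, and ensuring that no cancellation between the $h_1$ and $h_2$ contributions can rescue a disallowed leading term, is where the technical weight of the argument lies.
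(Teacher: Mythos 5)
Your derivation of the corollary itself is exactly the paper's: radial symmetry makes $iz_2\frac{\partial}{\partial z_2}$ tangent to $M$ (the paper also just cites \cite{By1} for this), so $\mathrm{hol}_0(M,0)$ contains the one-dimensional space $\{i\beta z_2\frac{\partial}{\partial z_2}\colon\beta\in\mathbb R\}$, and the bound $\dim_{\mathbb R}\mathrm{hol}_0(M,0)\le 1$ of Theorem \ref{Theorem3} forces equality. If Theorem \ref{Theorem3} may be cited as given, the corollary is done and your argument coincides with the paper's.

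The gap is in your plan for Theorem \ref{Theorem3}, which you propose to prove rather than cite. An order-comparison in $z_2$ cannot close the argument on its own, for precisely the reason you flag at the end: since $P$ and $P'$ are flat, every term in the tangency identity that carries a factor of $P$ or $P'$ is $o(|z_2|^n)$ for all $n$, so ``the polynomial part must vanish on its own'' eliminates nothing involving those factors --- and the dangerous terms (e.g.\ $\mathrm{Re}\,[b_{m_0n_0}z_2^{n_0}P'(z_2)]$ after setting $t=0$, or the $h_1$-contribution after substituting $t=\alpha P(z_2)$) are exactly of this kind. The paper's mechanism for killing them is Lemma \ref{Al3}: one follows an integral curve $\gamma(t)$ of $\dot z=bz^{\ell}(1+g_0(z))$ into the origin, observes that the tangency identity bounds $\frac{d}{dt}\log P(\gamma(t))$ (or controls it by $t^{-m/k}$, etc.), integrates to get a polynomial lower bound $P(\gamma(t))\ge|\gamma(t)|^{2A}$, and contradicts the flatness of $P$; the exceptional case $\ell=1$, $\mathrm{Re}\,b=0$ is precisely what leaves the rotational field alive and yields the linear-dependence step $\beta_2H_1-\beta_1H_2=0$. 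Without this dynamical-systems input (or a substitute for it), your sketch does not rule out a nonzero $h_1$ or a non-rotational leading term of $h_2$, so the dimension bound --- and hence the corollary --- is not established.
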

Next, we shall give an explicit description for real-analytic infinitesimal CR automorphisms of another class of real hypersurfaces in $\mathbb C^2$.

Let $a(z)=\sum_{n=1}^\infty a_n z^n$ be a nonzero holomorphic function defined on $\Delta_{\epsilon_0}~(\epsilon_0>0)$ and let $p,q$ be $\mathcal{C}^1$-smooth functions defined respectively on $(0,\epsilon_0)$ and $[0,\epsilon_0)$ satisfying that $q(0)=0$ and that $g(z),g'(z)$ are flat at $0$, where $g$ is a $\mathcal{C}^1$-smooth function given by
\[  
g(z)=
\begin{cases} e^{p(|z|)} &~\text{if}~0<|z|<\epsilon_0\\
0 &~\text{if}~z=0.
\end{cases}
\]

Denote by $M(a,\alpha, p,q)$ the germ at $(0,0)$ of a real hypersurface defined by
$$
\rho(z_1,z_2):= \mathrm{Re}~z_1+P(z_2)+f(z_2,\mathrm{Im}~z_1) =0,
$$
where $f$ and $P$ are respectively defined on $\Delta_{\epsilon_0}\times (-\delta_0,\delta_0)$ ($\delta_0>0$ small enough) and $\Delta_{\epsilon_0}$ by
\[
 f(z_2,t)=\begin{cases}
 -\frac{1}{\alpha}\log \Big|\frac{\cos \big(R(z_2)+\alpha t\big)}{\cos (R(z_2))} \Big| &~\text{if}~ \alpha\ne 0\\
 \tan(R(z_2))t  &~\text{if}~ \alpha =0,
\end{cases} 
 \]
 where $R(z_2)=q(|z_2|)- \mathrm{Re}\big(\sum_{n=1}^\infty\frac{a_n}{n} z_2^n\big)$ for all $z_2\in \Delta_{\epsilon_0}$,
and
\begin{equation*}
\begin{split}
  P(z_2)=
 \begin{cases}
\frac{1}{\alpha} \log \Big[ 1+\alpha P_1(z_2)\Big]~&\text{if}~ \alpha \ne 0\\
 P_1(z_2) ~&\text{if}~ \alpha=0,
\end{cases}
\end{split}
\end{equation*}
where 
\begin{equation*}
\begin{split}
P_1(z_2)=\exp\Big[p(|z_2|)+\mathrm{Re}\Big(\sum_{n=1}^\infty  \frac{a_n}{in}z_2^n\Big ) -\log \big|\cos\big(R(z_2)\big)\big|   \Big]
\end{split}
\end{equation*}
for all $z_2\in \Delta_{\epsilon_0}^*$ and $P_1(0)=0$.

 It is easily checked that $M(a,\alpha,p,q)$ is $\mathcal{C}^1$-smooth and moreover $P(z_2),P'(z_2)$ are flat at $0$. Furthermore, we note that $q,p$ can be chosen, e.g. $q(t)=0$ and $p(t)=-\frac{1}{t^\alpha}~(\alpha>0)$ for all $t>0$ , so that $P,R$ are $\mathcal{C}^\infty$-smooth in $\Delta_{\epsilon_0}$ and $P$ is flat at $0$, and hence $M(a,\alpha, p,q)$ is $\mathcal{C}^\infty$-smooth and of D'Angelo infinite type.  

It follows from Theorem \ref{T2} in Appendix that the holomorphic vector field
$$
H^{a,\alpha} (z_1,z_2):=L^\alpha (z_1) a(z_2)\frac{\partial }{\partial z_1}+iz_2\frac{\partial }{\partial z_2},
$$
where 
\[
L^\alpha(z_1)=
\begin{cases}
\frac{1}{\alpha}\big(\exp(\alpha z_1)-1\big)&\text{if}~ \alpha \ne 0\\
z_1 &\text{if}~ \alpha =0,
\end{cases}
\]
is tangent to $M(a,\alpha,p,q)$. Hence, by Theorem \ref{Theorem3} we obtain following corollary.  
\begin{corollary}
 $\mathrm{hol}_0\big(M(a,\alpha, p,q),0\big)=\{\beta H^{a,\alpha}\colon \beta \in \mathbb R\}$.
\end{corollary}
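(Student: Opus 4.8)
The plan is to turn the tangency condition into a single scalar identity and then squeeze all information out of it using the flatness hypothesis $(3)$ and the positivity hypothesis $(2)$. Write a candidate field as $H=f(z_1,z_2)\,\tfrac{\partial}{\partial z_1}+g(z_1,z_2)\,\tfrac{\partial}{\partial z_2}$ with $f,g$ holomorphic germs vanishing at $0$. Since $\rho$ is real valued and $H$ is holomorphic, tangency of $\mathrm{Re}\,H$ to $M$ is equivalent to $\mathrm{Re}(H\rho)\big|_{M}=0$, where $H\rho=f\rho_{z_1}+g\rho_{z_2}$. Writing $t=\mathrm{Im}\,z_1$, a direct computation gives $\rho_{z_1}=\tfrac12-\tfrac{i}{2}\bigl(Q+tQ_t\bigr)$ and $\rho_{z_2}=P'(z_2)+tQ_{z_2}$, and $M$ is parametrized by $(z_2,t)$ through $z_1=it-P(z_2)-tQ(z_2,t)$. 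Substituting, the tangency condition becomes the requirement that the continuous function $E(z_2,t):=\mathrm{Re}\bigl[f\rho_{z_1}+g\rho_{z_2}\bigr]$ vanish identically near $(0,0)$.

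First I would read off the identity on the slice $t=0$. There $z_1=-P(z_2)$, $\rho_{z_1}=\tfrac12-\tfrac{i}{2}Q(z_2,0)$ and $\rho_{z_2}=P'(z_2)$. Because $P$ and $P'$ are flat at $0$, the term $g(-P,z_2)P'(z_2)$ is flat and $f(-P,z_2)=f(0,z_2)+(\text{flat})$; moreover $Q(z_2,0)$ is real with $Q(0,0)=0$. Hence, setting $f_0(z_2):=f(0,z_2)$, the $t=0$ slice yields the fundamental relation
\[
\mathrm{Re}\,f_0(z_2)+Q(z_2,0)\,\mathrm{Im}\,f_0(z_2)=(\text{flat function of }z_2).
\]
The engine that converts such relations into rigidity is the elementary fact that a holomorphic function $\phi$ on a disc whose real part is flat at $0$ must be a purely imaginary constant, hence $\phi\equiv 0$ once $\phi(0)=0$. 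Since the correction $Q(z_2,0)\,\mathrm{Im}\,f_0$ is of higher order, I expect to bootstrap the displayed relation, together with the information in the full identity $E\equiv0$, to conclude that both $\mathrm{Re}\,f_0$ and $\mathrm{Im}\,f_0$ are flat, so that $f_0\equiv0$; equivalently $z_1$ divides $f$. This matches the model fields, where $f=L^\alpha(z_1)a(z_2)$ and $a(0)=0$.

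With $f(0,z_2)\equiv0$ in hand, I would extract the leading behaviour of the $z_2$-component: feed $f_0\equiv0$ back into $E\equiv0$, isolate the next order in $t$, and use $P(z_2)>0$ — which forbids the induced linear flow from dilating $|z_2|$, a motion that the flat function $P$ cannot absorb — to show that $g(0,z_2)=i\beta z_2+(\text{flat})$ for some real $\beta$, so that $g_{z_2}(0,0)=i\beta$ is purely imaginary. One then sets up the recursion obtained by expanding $f=\sum_j f_j(z_2)z_1^j$, $g=\sum_j g_j(z_2)z_1^j$ and matching orders in $E\equiv0$: each step should determine the coefficient germs $f_j,g_j$ from the lower ones and from $\beta$, so that the whole germ $H$ is uniquely determined by the single real number $\beta$. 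Since $g_{z_2}(0,0)$ is purely imaginary for every tangent field and $H$ is determined by it, the real-linear map $H\mapsto\beta=\mathrm{Im}\,g_{z_2}(0,0)$ is injective into $\mathbb{R}$, whence $\dim_{\mathbb{R}}\mathrm{hol}_0(M,0)\le1$.

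The main obstacle is analytic rather than algebraic. Because $P$ and $Q$ are assumed only $\mathcal{C}^1$, the restricted function $E(z_2,t)$ is merely continuous in $t$: there is no freedom to differentiate repeatedly in $t$ and match a Taylor series, so every higher-order relation must be wrung out of the $t=0$ slice, the behaviour along the axis $z_2=0$, and the holomorphic dependence on $z_1$, while scrupulously tracking that all discarded remainders are genuinely flat, including after differentiation in $z_2$. Making the passage from the single real relation on $f_0$ to the conclusions $f_0\equiv0$ and $\mathrm{Re}\,g_{z_2}(0,0)=0$ rigorous, and verifying that the coefficient recursion admits at most a one-parameter family of solutions, is where the real work lies; the positivity hypothesis $(2)$ enters precisely to pin down the reality of $\beta$ and to prevent a second independent solution.
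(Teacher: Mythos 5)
Your proposal only addresses the inclusion $\subseteq$, but the corollary asserts an equality of sets: the inclusion $\supseteq$ requires verifying that $H^{a,\alpha}$ actually is tangent to $M(a,\alpha,p,q)$, which in the paper is the content of Theorem \ref{T2} in the Appendix (a nontrivial computation resting on the identities (i)--(v) relating $P$, $P_1$, $R$, $f$ and $a$). Saying "this matches the model fields" is not a substitute for that verification; without it you cannot identify the (at most) one-dimensional space as $\{\beta H^{a,\alpha}\}$ even after establishing $\dim_{\mathbb R}\mathrm{hol}_0\le 1$. (Your $t=0$ slice argument giving $f(0,z_2)\equiv 0$ is fine: $g(-P,z_2)P'$ and $f(-P,z_2)-f(0,z_2)$ are flat, $Q(z_2,0)=O(|z_2|)$ is real, and an induction on the lowest Taylor coefficient of $f_0$ closes it. That is a legitimate reorganization of the paper's Case 1.)

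The genuine gap in the upper bound is the step from $f(0,z_2)\equiv 0$ to ``$g(0,z_2)=i\beta z_2+\cdots$ with $\beta$ real, and $H$ determined by $\beta$.'' Ruling out a leading term $bz_2^\ell$ of $g(0,\cdot)$ with $\ell\ne 1$, or with $\ell=1$ and $\mathrm{Re}\,b\ne 0$, is precisely the analytic core of the paper (Lemmas \ref{Al3} and \ref{cor3}); your one-sentence justification (``$P>0$ forbids the induced flow from dilating $|z_2|$, a motion that $P$ cannot absorb'') gestures at the right mechanism but is not an argument. The paper integrates the planar ODE $\dot z=bz^{\ell}(1+O(z))$, produces trajectories with $|\gamma(t)|\approx t^{-1/k}$ tending to $0$, and shows the tangency identity forces $\log P(\gamma(t))$ either to stay bounded, to be $O(\log(1/|\gamma(t)|))$, or to tend to $+\infty$ --- each contradicting the positivity and flatness of $P$ --- and this takes several genuinely distinct subcases. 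Moreover, your plan to pin down the rest of the germ by a ``coefficient recursion'' in powers of $z_1$ collides with exactly the obstruction you name yourself: $P,Q$ are only $\mathcal C^1$, so you cannot match orders in $t$, and you do not resolve this. The paper avoids the recursion entirely by proving the sharper Theorem \ref{Th1} (a tangent field containing no term $i\beta z_2\frac{\partial}{\partial z_2}$ vanishes) and applying it to $\beta_2H_1-\beta_1H_2$. You should either adopt that linear-combination reduction or actually supply the recursion; as written, the claim that $H$ is determined by $\beta$ is unproved.
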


This paper is organized as follows. In Section $2$, we recall several definitions and give several technical lemmas. Next, the proof of Theorem \ref{Theorem3} is given in Section $3$. Finally, a theorem is pointed out in Appendix.


\section{Preliminaries}
In this section, we shall recall several definitions and introduce two technical lemmas used in the proof of Theorem \ref{Theorem3}. In what follows, $\lesssim$ and $\gtrsim$ denote inequalities up to a positive constant. In addition, we use $\approx $ for the combination of $\lesssim$ and $\gtrsim$.
\begin{define}\label{def1}
A function $f:\Delta_{\epsilon_0}\to \mathbb C$ ($\epsilon_0>0$) is called to be \emph{flat} at $z=0$ if for each $n\in\mathbb N$ there exist positive constants $C,\epsilon >0$, depending only on $n$, with $0<\epsilon<\epsilon_0$ such that
$$
|f(z)|\leq C|z|^n
$$
for all $z\in \Delta_{\epsilon}$.
\end{define}

We note that in the above definition we do not need the smoothness of the function $f$. For example, the following function
\[
f(z)=
\begin{cases}
\frac{1}{n} e^{-\frac{1}{|z|^2}} &~\text{if}~\frac{1}{n+1}<|z|\leq \frac{1}{n}~,n=1,2,\ldots\\
0 &~\text{if}~z=0
\end{cases}
\]
is flat at $z=0$ but not continuous on $\Delta$. However, if $f\in \mathcal{C}^\infty(\Delta_{\epsilon_0})$ then it follows from the Taylor's theorem that $f$ is flat at $z=0$ if and only if
$$
\frac{\partial^{m+n}}{\partial z^m\partial \bar z^n}f(0)=0
$$ 
for every $m,n\in \mathbb N$, i.e., $f$ vanishes to infinite order at $0$. Consequently, if $f\in \mathcal{C}^\infty(\Delta_{\epsilon_0})$ is flat at $0$ then $\frac{\partial^{m+n} f}{\partial z^m\partial \bar z^n}$ is also flat at $0$ for each $m,n\in \mathbb N$.

Let $F$ be a $\mathcal{C}^1$-smooth complex-valued function defined in a neighborhood $U$ of the origin in the complex plane. We consider the autonomous dynamical system 
\begin{equation}\label{eq??1}
\frac{dz}{dt}=F(z),~z(0)=z_0\in U.
\end{equation}
First of all, let us recall several definitions.
\begin{define} A state $\hat z\in U$ is called an equilibrium of (\ref{eq??1}) if $F(\hat z)=0$.
\end{define}

\begin{define} An equilibrium, $\hat z$, of (\ref{eq??1}) is called locally asymptotically stable if for all $\epsilon>0$ there exists $\delta>0$ such that $|z_0-\hat z|<\delta$ implies that $|z(t)-\hat z|<\epsilon$ for all $t\geq 0$ and $\lim_{t\to +\infty} z(t)=0$. 
\end{define}

The following lemma is a generalization of Lemma $3$ in \cite{Kim-Ninh} and plays a key role in the proof of Theorem \ref{Theorem3}.
\begin{lemma}\label{Al3} Let $P:\Delta_{\epsilon_0}\to \mathbb R$ be a $\mathcal{C}^1$-smooth function satisfying that $P(z)>0$ for any $z\in \Delta^*_{\epsilon_0}$ and that $P$ is flat at $0$. If $a, b$ are complex numbers and if $g_0, g_1, g_2$ are $\mathcal{C}^1$-smooth functions defined on $\Delta_{\epsilon_0}$ satisfying:
\begin{itemize}
\item[(A1)] $g_0(z) = O(|z|)$, $g_1(z) = O(|z|^\ell)$, and $g_2(z) = o(|z|^m)$, and
\item[(A2)] $\text{Re} \Big[a z^m+\frac{1}{P^n(z)}\Big(b z^\ell\big(1+g_0(z)\big) \frac{P'(z)}{P(z)}
+g_1(z) \Big)\Big]= g_2(z)$ for every $z \in \Delta^*_{\epsilon_0}$
\end{itemize}
for any nonnegative integers $\ell, m$ and $n$ except for the following two cases
\begin{itemize}
\item[(E1)] $\ell=1$ and $\text{Re }b = 0$, and
\item[(E2)] $m=0$ and $\text{Re } a = 0$
\end{itemize}
then $ab=0$.
\end{lemma}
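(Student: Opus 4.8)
The plan is to argue by contradiction: assume both $a\neq 0$ and $b\neq 0$, assume we are neither in case (E1) nor in case (E2), and derive a contradiction. The guiding principle is that the flatness of $P$ makes the factor $1/P^n$ separate the identity (A2) into two incompatible scales: a \emph{polynomial scale} carried by $\mathrm{Re}[a z^m]$ and the output $g_2=o(|z|^m)$, and a scale carried by $\frac{1}{P^n}\,\mathrm{Re}\big[b z^\ell(1+g_0)\frac{P'}{P}+g_1\big]$ whose size is governed by the growth of $P'/P$. The whole argument splits naturally according to whether $n\ge 1$ or $n=0$, and this split is exactly what makes the two exceptional configurations appear.

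First I would treat $n\ge 1$. Multiplying (A2) by $P^n(z)>0$ and using $\mathrm{Re}[a z^m]=O(|z|^m)$, $g_2=o(|z|^m)$ together with the flatness of $P$ (so that $P^n\cdot O(|z|^m)$ is flat), the polynomial terms collapse and one is left with the single constraint that
\[
\mathrm{Re}\Big[b z^\ell\big(1+g_0(z)\big)\frac{P'(z)}{P(z)}+g_1(z)\Big] \quad\text{is flat at } 0 .
\]
Since $g_1=O(|z|^\ell)$ and $g_0=O(|z|)$, this ties $\mathrm{Re}[b z^\ell\frac{P'}{P}]$ to a quantity of fixed polynomial order $|z|^\ell$. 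The key point is that $P'/P=(\log P)_z$ must be \emph{unbounded} as $z\to 0$: if $|P'/P|$ were bounded then $\log P$ would be locally bounded, contradicting $P\to 0$. Writing $z=re^{i\phi}$, the leading behaviour of $z^\ell\frac{P'}{P}$ is of the form $r^{\sigma}e^{ik\phi}$ with $k=\ell-1$ (up to the error factor $1+g_0$), so that $\mathrm{Re}[b z^\ell\frac{P'}{P}]\sim r^{\sigma}\,\mathrm{Re}[b e^{i(\ell-1)\phi}]$. Flatness of the whole expression forces the oscillatory coefficient $\mathrm{Re}[b e^{i(\ell-1)\phi}]$ to vanish for all $\phi$; this yields $b=0$ unless $\ell-1=0$ and $\mathrm{Re}\,b=0$, i.e. unless we are in (E1). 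Since we have assumed we are \emph{not} in (E1), we conclude $b=0$, contradicting $b\neq 0$. Thus the case $n\ge 1$ is settled using only the exclusion of (E1).

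The exclusion of (E2) is needed precisely in the remaining case $n=0$, where (A2) reads $\mathrm{Re}[a z^m+b z^\ell(1+g_0)\frac{P'}{P}+g_1]=o(|z|^m)$ and no $1/P^n$ blow-up is available to isolate $b$. Here I would sub-divide according to the order $\sigma$ of the $b$-term relative to $|z|^m$. If the $b$-term is larger than $o(|z|^m)$ (which happens when $P'/P$ blows up fast enough relative to $\ell$ and $m$), then as above its leading coefficient must vanish, forcing $b=0$ unless (E1); by not-(E1) this gives $b=0$, a contradiction. If instead the $b$-term is itself $o(|z|^m)$-admissible, then the residual balance forces $\mathrm{Re}[a z^m]=o(|z|^m)$ along a full range of directions $\phi$, whence $a=0$ unless $m=0$ and $\mathrm{Re}\,a=0$, i.e. unless we are in (E2); by not-(E2) this gives $a=0$, again a contradiction. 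In every branch we contradict the assumption $a\neq 0$, $b\neq 0$, so $ab=0$.

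The step I expect to be the main obstacle is the rigorous control of $z^\ell P'/P$ for a merely $\mathcal{C}^1$, not necessarily radial, flat function $P$. Flatness of $P$ does \emph{not} imply flatness of $P'$, and the order $\sigma$ and the angular factor picked out by the operator $\mathrm{Re}$ are not given by a clean formula as in the model case $P=e^{-1/|z|^2}$; one must extract them from the mere facts that $P>0$, $P(0)=0$, and $P$ is flat. Concretely, the delicate work is to choose approach sequences $z_j\to 0$ along which $P'/P$ is controlled from below (governing $\log P$ and hence its gradient), to separate the contributions of $\mathrm{Re}\,b$ and $\mathrm{Im}\,b$ through the oscillation in $\arg z$, and to absorb the error terms $1+g_0$ and $g_1$ uniformly so that exactly the two degenerate configurations $\ell=1,\ \mathrm{Re}\,b=0$ and $m=0,\ \mathrm{Re}\,a=0$ survive. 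Making these asymptotics uniform in the parameters $\ell,m,n$ is where the real effort lies, and it is the point at which this statement genuinely generalizes Lemma~$3$ of \cite{Kim-Ninh}.
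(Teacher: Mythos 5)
There is a genuine gap, and it sits exactly where you flagged it: the claim that $z^\ell P'(z)/P(z)$ has a ``leading behaviour of the form $r^{\sigma}e^{i(\ell-1)\phi}$'' so that flatness of $\mathrm{Re}\big[b z^\ell(1+g_0)P'/P+g_1\big]$ forces $\mathrm{Re}\big[b e^{i(\ell-1)\phi}\big]\equiv 0$. For a general $\mathcal{C}^1$ function $P$ that is merely positive and flat at $0$, no such asymptotic expansion exists: the only thing you actually establish is that $|P'/P|$ is unbounded near $0$ (via the Lipschitz argument for $\log P$), and unboundedness along \emph{some} sequence gives no control of the argument of $P'/P$, no power $\sigma$, and no angular factor $e^{i(\ell-1)\phi}$ --- that factor is an artifact of the radial model $P=P(|z|)$, where $P_z=\frac{\bar z}{2|z|}P'(|z|)$. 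The set where $|P'/P|$ is large could be a thin spiral on which $\arg(P'/P)$ conspires to make $\mathrm{Re}[bz^\ell P'/P]$ small even for $b\neq 0$, so the ``angular separation'' step cannot be carried out pointwise. The same obstruction undermines the $n=0$ branch, where you need to decide whether the $b$-term is or is not $o(|z|^m)$ ``along a full range of directions.'' Since you explicitly leave this step open, the proposal is a plan rather than a proof, and the proposed route for closing it is not viable without extra hypotheses on $P$.

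The paper's proof takes a genuinely different route that circumvents pointwise control of $P'/P$ altogether: it integrates along trajectories of the ODE $\dot\gamma=b\gamma^\ell\big(1+g_0(\gamma)\big)$. Along such a trajectory the problematic term becomes an exact derivative,
\begin{equation*}
\frac{d}{dt}\,\frac{1}{2}\log P(\gamma(t))=\mathrm{Re}\Big[\frac{P'(\gamma(t))}{P(\gamma(t))}\,b\gamma^\ell(t)\big(1+g_0(\gamma(t))\big)\Big],
\end{equation*}
so identity (A2) converts into a bound on $u'(t)$ for $u(t)=\frac12\log P(\gamma(t))$, which one integrates to get $|u(t)|\lesssim 1$, $|u(t)|\lesssim\log\frac{1}{|\gamma(t)|}$, or $u(t)\to+\infty$, each contradicting the flatness of $P$ (which forces $\log P(\gamma(t))$ to $-\infty$ faster than any multiple of $\log\frac{1}{|\gamma(t)|}$). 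The exceptional case (E1) appears there because for $\ell=1$, $\mathrm{Re}\,b=0$ the trajectories are closed orbits that do not approach $0$, and (E2) appears in the subcase $m=0$ where the comparison function is $\mathrm{Re}(a)\,t$. If you want to repair your argument, the trajectory method (together with the asymptotics $|\gamma_j(t)|\approx t^{-1/k}$ and the choice of a branch $j$ making a certain cosine positive in the subcase $0<m/k<1$) is the missing ingredient.
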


\begin{proof} We shall prove the lemma by contradiction.  Suppose that there exist
non-zero complex numbers $a,b\in \mathbb C^*$ such that the identity in (A2) holds with the smooth functions $g_0, g_1$, and $g_2$ satisfying the growth conditions specified in (A1).

Denote by $F(z):=\dfrac{1}{2}\log P(z) $ for all $z\in \Delta^*_{\epsilon_0}$ and by $f(z):=bz^\ell(1+g_0(z))$ for all $z\in \Delta_{\epsilon_0}$.
\medskip

\noindent
{\bf Case 1.} {\boldmath $\ell=0$:}

Let $\gamma: [0,\delta_0)\to \Delta_{\epsilon_0}~(\delta_0>0)$ be the solution of the initial-value problem 
$$ 
\frac{d\gamma(t)}{dt}=b+bg_0(\gamma(t)),\quad \gamma(0)=0.
$$
Let us denote by $u(t):=F(\gamma(t)),~0<t< \delta_0$. By (A2), it follows that $u'(t)$ is bounded on the interval $(0, \delta_0)$. Integration shows that $u(t)$ is also bounded on $(0, \delta_0)$. But this is impossible since
$u(t)\to -\infty$ as $t\downarrow 0$.
\medskip

\noindent
{\bf Case 2.} {\boldmath $\ell=1$:}

By (E1), we have $b_1:=\text{Re} (b)\ne 0$. Assume momentarily that $b_1<0$. Let $\gamma: [t_0,+\infty)\to \Delta^*_{\epsilon_0}~(t_0>0)$ be the solution of the initial-value problem 
\begin{equation}\label{teq1}
\frac{d\gamma(t)}{dt}=b\gamma(t)\Big(1+g_0(\gamma(t)\big)\Big),~\gamma(t_0)=z_0\in \Delta^*_{\epsilon_0}.
\end{equation}

Thanks to \cite[Theorem 5]{Co}, the system (\ref{teq1}) is locally trajectory equivalent at the origin to the system
$$
\frac{dz}{dt}=b z(t).
$$
It is well-known that the origin is a locally asymptotically stable equilibrium of the above diffenrential equation. Therefore, 
we have $\gamma(t)\to 0$ as $t\to +\infty$. Moreover, we can assume that $|\gamma(t)|<r_1$ for every $t_0<t<+\infty$, where $r_1:=1/2$ if $\mathrm{Im}(b)=0$ and $r_1:=\min\{1/2,|b_1|/(4|\mathrm{Im}(b)|)\}$ if otherwise. This implies that $\mathrm{Re}\Big(b(1+g_0(\gamma(t)))\Big)<b_1/4<0$. Integration and a simple estimation tell us that 
$$
 |\gamma(t)|\leq |\gamma(t_0)|\exp\Big(b_1t/4\Big),~\forall t>t_0.
$$ 
Consequently, this in turn yields that $t\lesssim \log \frac{1}{|\gamma(t)|}$.

Denote by $u(t):= F(\gamma(t))$ for $t\geq t_0$. Then, it follows
from (A2) that $u'(t)$ is bounded on $(t_0,+\infty)$, and thus $|u(t)|\lesssim t$.
Therefore, there exists a constant $A>0$ such that
$ |u(t)|\leq A\log \dfrac{1}{|\gamma(t)|}$ for all $t>t_0$.
Hence we obtain, for all $t>t_0$, that
$\log P(\gamma (t))=2u(t)\geq -2A \log \dfrac{1}{|\gamma(t)|} $, and thus
$$
P(\gamma(t))\geq |\gamma(t)|^{2A}, \; t\geq t_0.
$$
Hence we arrive at
$$
\lim_{t\to +\infty} \frac{P(\gamma(t))}{|\gamma(t)|^{2A+1}}=+\infty,
$$
which is impossible since $P$ is flat at $0$.  The case $b_1 >0$ is similar, with considering the side $t<0$ instead.
\medskip

\noindent
{\bf Case 3.} {\boldmath $\ell=k+1\geq 2$:}

Let $\gamma: [t_0, +\infty)\to \Delta^*_{\epsilon_0}~(t_0>0)$ be a solution of the initial-value problem 
\begin{equation}\label{teq2}
\frac{d\gamma(t)}{dt}=f(\gamma(t))=b\gamma^{k+1}(t)\Big(1+g_0(\gamma(t)\big)\Big),~\gamma(t_0)=z_0\in\Delta^*_{\epsilon_0}.
\end{equation}
According to \cite[Theorem 5]{Co}, the system (\ref{teq2}) is locally trajectory equivalent at the origin to the system
$$
\frac{dz}{dt}=b z^{k+1}(t).
$$
Hence, it follows from \cite[Theorem 1]{S} that $\gamma(t)\to 0$ as $t\to +\infty$. 

Now we shall estimate $\gamma(t)$. Indeed, integration shows that
\begin{equation}\label{eq???1}
\frac{1}{\gamma^k(t)}=c-kbt\big(1+\epsilon(t)\big),~\forall t>t_0,
\end{equation}
where $c$ is a constant depending only on the initial condition and  
\begin{equation*}
\epsilon(t)= \frac{\int_{t_0}^t g_o(\gamma(s))ds}{t-t_0} ~\text{for every}~t>t_0.
\end{equation*}

Choose $\delta>0$ such that either $\arg\big(b(1+z)\big)\in (0,2\pi)$ for all $z\in \Delta_\delta$ (for the case $\mathrm{Im}(b)\ne 0$) or $\arg\big(b(1+z)\big)\in (-\pi/2,3\pi/2)$ for all $z\in \Delta_\delta$ (for the case $\mathrm{Im}(b)=0$). Moreover, without loss of generality we can assume that $|g_0(\gamma(t))|<\delta$ for all $t> t_0$ and hence $|\epsilon(t)|<\delta$ for all $t>t_0$. Therefore, by changing the initial condition $\gamma(t_0)=z_0$ if necessary, we may assume that either
$c-kbt(1+\epsilon(t)),c-kbt\in \mathbb C\setminus [0,+\infty)$ for all $t\in [t_0, +\infty)$ or $c-kbt(1+\epsilon(t)),c-kbt\in \mathbb C\setminus (-\infty i,0]$ for all $t\in  [t_0, +\infty)$. Without loss of generality, we can assume that the first case occurs. 

Notice that $\omega_j(t):=\tau^{-j}\sqrt[-k]{c-kbt},~j=0,\ldots,k-1,$ are solutions of the equation 
$$
\frac{dz}{dt}=bz^{k+1},
$$
where $\tau:=e^{i2\pi/k}$. Furthermore, for each $j\in \{0,1,\ldots,k-1\}$ let $\theta_j(t)~(t\geq t_0)$ be the solution of the equation
$$
\theta_j'(t)=f(\omega_j(t)+\theta_j(t))-b\omega^{k+1}_j(t)
$$
satisfying $\theta_j(t_0)=0$. Then $\gamma_j(t):=\omega_j(t)+\theta_j(t)~(t>t_0),~ j=0,1,\ldots,k-1$, are solutions of 
$$  
 \frac{dz}{dt}=f(z).
$$ 
Moreover, again by changing the initial condition $\gamma(t_0)=z_0$ if necessary we can assume that $|g_0(\gamma_j(t))|<\delta$ for every $j=0,1,\ldots,k-1$ and for every $t>t_0$. In addition, integeration shows that
\begin{equation}\label{eq???2}
\frac{1}{\gamma_j^k(t)}=c-kbt\big(1+\epsilon_j(t)\big),~\forall t>t_0,
\end{equation}
where $\epsilon_j(t)=\frac{\int_{t_0}^t g_o(\gamma_j(s))ds}{t-t_0}$ for every $t>t_0$ and for every $j=0,1,\ldots,k-1$. Hence, we obtain the following. 
\begin{equation*}
\begin{split}
\gamma_j(t)&=\tau^{-j}\sqrt[-k]{c-kbt\big(1+\epsilon_j(t)\big)}\\
        &= \tau^{-j}\sqrt[-k]{|c-kbt(1+\epsilon(t))|} e^{-i \arg \big(c-kbt(1+\epsilon(t))\big)/k}\\
         &= \sqrt[-k]{|c-kbt(1+\epsilon(t))|} e^{-i \arg \big(c-kbt(1+\epsilon(t))\big)/k-i2\pi j/k}, 
\end{split}
\end{equation*}
where $0<\arg\big(c-kbt(1+\epsilon_j(t))\big)/k<2\pi$, for every $j=0,1,\ldots,k-1$. Consequently, $|\gamma_j(t)|\approx \dfrac{1}{|t|^{1/k}}$ for all $t\geq t_0$ and for all $j=0,1,\ldots,k-1$.

Let $u_j(t):= F(\gamma_j(t))$ for $j=0,1,\ldots,k-1$. It follows from (A2) that
\begin{equation}\label{eq77}
u_j'(t)=-P^n(\gamma_j(t))\Big(\text{Re} \big(a \gamma_j^m(t) +o(|\gamma_j(t)|^m)\big)\Big)+O(|\gamma_j(t)|^{k+1})
\end{equation}
for all $t>t_0$ and for all $j=0,1,\ldots,k-1$.

We now consider the following.
\medskip

\underbar{\it Subcase 3.1}: $n\geq 1.$

Since $P$ is flat at the origin, (\ref{eq77}) and the discussion above imply
\begin{equation*}
\begin{split}
|u_0'(t)|&\lesssim P^n(\gamma_0(t)) |\gamma_0(t)|^m+\frac{1}{t^{1+1/k}}\\
&\lesssim P^n(\gamma_0(t))+\frac{1}{t^{1+1/k}}\\
&\lesssim \frac{P^n(\gamma_0(t))}{|\gamma_0(t)|^{2k}} \frac{1}{t^2}+\frac{1}{t^{1+1/k}}\\
&\lesssim  \frac{1}{t^2}+\frac{1}{t^{1+1/k}} \\
&\lesssim  \frac{1}{t^{1+1/k}}
\end{split}
\end{equation*}
for all $t\geq t_0$. This in turn yields
\begin{equation*}
\begin{split}
|u_0(t)|&\lesssim |u_0(t_0)|+ \int_{t_0}^t   \frac{1}{s^{1+1/k}}ds\\
 &\lesssim |u_0(t_0)|
+k \Big(\frac{1}{t_0^{1/k}}-\frac{1}{t^{1/k}}\Big)\\
&\lesssim 1
\end{split}
\end{equation*}
for all $t>t_0$. This is a contradiction, because $\lim_{t\to\infty} u_0(t)=-\infty$.
\medskip

\noindent
\underbar{\it Subcase 3.2}: $n=0$.

We again divide the argument into 4 sub-subcases.
\medskip

\underbar{\it Subcase 3.2.1}: $m/k>1$.

It follows from (\ref{eq77}) that
\begin{equation*}
|u_0'(t)|\lesssim  \frac{1}{t^{m/k}}+\frac{1}{t^{1+1/k}}\\
\end{equation*}
for all $t\geq t_0$. Hence, we get

\begin{equation*}
\begin{split}
|u_0(t)|&\lesssim |u_0(t_0)|+ \int_{t_0}^t   \Big(\frac{1}{s^{m/k}}+\frac{1}{s^{1+1/k}}\Big)ds\\
 &\lesssim |u_0(t_0)|+ \frac{k}{m-k}  \Big(\frac{1}{t_0^{m/k-1}}-\frac{1}{t^{m/k-1}}\Big)
+k \Big(\frac{1}{t_0^{1/k}}-\frac{1}{t^{1/k}}\Big)\\
&\lesssim 1
\end{split}
\end{equation*}
for all $t>t_0$, which contradicts $\lim_{t\to +\infty}u_0(t)=-\infty$.
\medskip

\underbar{\it Subcase 3.2.2}: ${m}/{k}=1$.

Here, (\ref{eq77}) again implies
\begin{equation*}
|u_0'(t)| \lesssim  \frac{1}{t}+\frac{1}{t^{1+1/k}} \lesssim  \frac{1}{t}
\end{equation*}
for all $t\geq t_0$. Consequently,
\begin{equation*}
\begin{split}
|u_0(t)|&\lesssim |u_0(t_0)|+ \int_{t_0}^t   \frac{1}{s}\ ds\\
 &\lesssim |u_0(t_0)|+  (\log t-\log t_0)\\
 &\lesssim \log t \\
 &\lesssim \log \frac{1}{|\gamma_0(t)|}
\end{split}
\end{equation*}
for all $t>t_0$. Therefore there exists a constant $A>0$ such that
$ |u_0(t)|\leq A\log \dfrac{1}{|\gamma_0(t)|}$ for all $t>t_0$.
Hence for all $t>t_0$,
$\log P(\gamma_0 (t))=2u(t)\geq -2A \log \dfrac{1}{|\gamma_0(t)|} $, and thus
$$
P(\gamma_0(t))\geq |\gamma_0(t)|^{2A}, \; \forall t\geq t_0.
$$
This ensures
$$ \lim_{t\to +\infty} \frac{P(\gamma_0(t))}{|\gamma_0(t)|^{2A+1}}=+\infty,$$
which is again impossible since $P$ is flat at $0$.
\medskip

\underbar{\it Subcase 3.2.3}: $m=0$.

Let $h(t):=u_0(t)+\text{Re}(a) t$. Recall that in this case we have (E2) which says $\text{Re }a \neq 0$. Assume momentarily that $\text{Re}(a)<0$. (The case that $\text{Re}(a)>0$ will follow by a similar argument.)

By (\ref{eq77}), there is a constant $B>0$ such that
$$
|h'(t)|\leq \frac{1}{2}|\text{Re}(a)| +B\frac{1}{t^{1+1/k}}.
$$
Therefore,
\begin{equation*}
\begin{split}
|h(t)|&\leq |h(t_0)|+ \frac{1}{2}|\text{Re}(a)| (t-t_0)+ B\int_{t_0}^t   \frac{1}{s^{1+1/k}}ds\\
 &\leq |h(t_0)|+ \frac{1}{2}|\text{Re}(a)| (t-t_0)+k B(\frac{1}{t_0^{1/k}}-\frac{1}{t^{1/k}})\\
\end{split}
\end{equation*}
for all $t>t_0$. Thus
\begin{equation*}
\begin{split}
u_0(t) &\geq -\text{Re}(a) t-|h(t)|\\
    &\geq |\text{Re}(a)| t-|h(t_0)|- \frac{1}{2}|\text{Re}(a)| (t-t_0)-
k B(\frac{1}{t_0^{1/k}}-\frac{1}{t^{1/k}})\\
     &\gtrsim t
\end{split}
\end{equation*}
for all $t>t_0$. It means that $u_0(t)\to +\infty$ as $t\to +\infty$, and it is hence absurd.
\medskip

\underbar{\it Subcase 3.2.4}: $0<\frac{m}{k}<1$.
Assume for a moment that $m$ and $k$ are relatively prime. (In the end, it will become obvious that this assumption can be taken without loss of generality.) Then $\tau^{m}$ is a primitive $k$-th root of unity. Therefore there
exist $j_0,j_1\in \{1,\cdots,k-1\} $ such that $\pi/2<arg(\tau^{mj_0})\leq\pi$ and
$-\pi\leq arg(\tau^{mj_1})<-\pi/2$. Hence, it follows that there exists
$j\in \{0,\cdots,k-1\}$ such that $\cos\big(arg(a/b)+\frac{k-m}{k} arg(-b)-2\pi mj/k\big)>0$.
Denote by
$$
A:=\frac{|a|}{(k-m)|b|}\cos\Big(arg(a/b)+ \frac{k-m}{k}arg(-b)-2\pi m j/k\Big)>0,
$$
a positive constant. Now let
$$
h_j(t):= u_j(t)+\text{Re}(\tau^{-mj} \frac{a}{-b(k-m)}(c-kbt)^{1-m/k}).
$$
Note that $arg\big(c-kbt)\big)\to arg (-b) $ as $t\to +\infty$ and $\delta>0$ can be chosen so small that there exists $t_1>t_0$ big enough such that
\begin{equation*}
\begin{split}
\Big|\gamma_j^m(t)-\tau^{-mj}\Big(\frac{1}{c-kbt}\Big)^{m/k}\Big|&=\Big|\frac{1}{\big(c-kb(t+\epsilon_j(t))\big)^{m/k}}\Big[1-\Big(1-\frac{kbt\epsilon_j(t)}{c-kbt}\Big)^{m/k}\Big]\Big| \\
&\leq \frac{k-m}{8k|a|} A (k|b|)^{1-m/k}\frac{1}{t^{m/k}}
\end{split}
\end{equation*}
for every $t>t_1$. Hence it follows from (\ref{eq77}) that there exist positive constants $B$ and $t_2~(t_2>t_1)$ such that
$$
|h_j'(t)|\leq \frac{k-m}{4k} A (k|b|)^{1-m/k}\frac{1}{t^{m/k}}+\frac{B}{t^{1+1/k}}
$$
and
\begin{multline*}
\cos\Big(\arg(a/b)+\frac{k-m}{k}\arg(c-kbt)-2mj\pi /k\Big) \\
\geq \frac{1}{2}\cos\Big(\arg(a/b)+ \frac{k-m}{k}\arg(-b)-2mj \pi /k\Big)
\end{multline*}
for every $t\geq t_2$. Thus we have
 \begin{equation*}
\begin{split}
|h_j(t)|&\leq |h_j(t_2)|+A(k|b|)^{1-m/k}\frac{k-m}{4k} \int_{t_2}^t s^{-m/k}ds
+B\int_{t_2}^ts^{-1-1/k}ds\\
        & \leq |h_j(t_2)|+\frac{A}{4}(k|b|)^{1-m/k} (t^{1-m/k}-t_2^{1-m/k})
+kB(t_2^{-1/k}-t^{-1/k})
\end{split}
\end{equation*}
for $t>t_2$. Hence
\begin{equation*}
\begin{split}
u_j(t)&\geq -\text{Re} (\frac{a \tau^{-mj}}{-kb(1-m/k)} (c-kbt)^{1-m/k}) -|h_j(t)|\\
& \geq \frac{|a|}{|b|(k-m))} |c-kbt|^{1-m/k}\cos\Big( arg(a/b)\\
&\qquad +\frac{(k-m)arg(c-kbt)- 2mj\pi }{k}\Big)- |h_j(t_2)|\\
&\qquad -\frac{A}{4} (k|b|)^{1-m/k}(t^{1-m/k}-t_2^{1-m/k})-kB(t_2^{-1/k}-t^{-1/k})\\
&\geq \frac{A}{2}|c-kbt|^{1-m/k}- |h_j(t_2)|\\
&\qquad -\frac{A}{4}(k|b|)^{1-m/k} (t^{1-m/k}-t_2^{1-m/k})-kB(t_2^{-1/k}-t^{-1/k})\\
&\gtrsim t^{1-m/k}
\end{split}
\end{equation*}
for $t>t_2$.  This implies that $u_j(t)\to +\infty$ as $t\to +\infty$,
which is absurd since $\log P(z)\to -\infty$ as $z \to 0$.
\medskip

Hence all the cases are covered, and the proof of Lemma \ref{Al3} is finally complete.
\end{proof}

Following the proof of Lemma \ref{Al3}, we have the following lemma.
\begin{lemma}\label{cor3} Let $P:\Delta_{\epsilon_0}\to \mathbb R$ be a $\mathcal{C}^1$-smooth function satisfying that $P(z)>0$ for any $z\in \Delta^*_{\epsilon_0}$ and that $P$ is flat at $0$. If $b$ is a complex number and if $g$ is a $\mathcal{C}^1$-smooth function defined on $\Delta_{\epsilon_0}$ satisfying:
\begin{itemize}
\item[(B1)] $g(z) = O(|z|^{k+1})$, and
\item[(B2)] $\text{Re} \Big[\big(b z^k+g(z)\big)P'(z)\Big]= 0$ for every $z \in \Delta_{\epsilon_0}$
\end{itemize}
for some nonnegative integer $k$, except the case $k= 1$ and $\text{Re} (b)=0$, then $b=0$.
\end{lemma}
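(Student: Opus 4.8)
The plan is to argue by contradiction: assume $b\neq 0$ and derive a contradiction with the flatness of $P$, reusing the integral-curve constructions from the proof of Lemma \ref{Al3}. The starting point is the observation that (B2) becomes an exact conservation law along the appropriate flow. I consider the autonomous system $\frac{dz}{dt}=F(z)$ with $F(z):=bz^k+g(z)$, a $\mathcal{C}^1$ vector field on $\Delta_{\epsilon_0}$ whose leading term is $bz^k$ and whose remainder $g(z)=O(|z|^{k+1})$ is of strictly higher order. For any integral curve $\gamma(t)$ contained in $\Delta^*_{\epsilon_0}$, I set $u(t):=\frac{1}{2}\log P(\gamma(t))$. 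Since $P$ is real-valued and $\mathcal{C}^1$, the chain rule gives $u'(t)=\frac{\mathrm{Re}\big(P'(\gamma(t))\gamma'(t)\big)}{P(\gamma(t))}$; substituting $\gamma'(t)=b\gamma^k(t)+g(\gamma(t))$ and invoking (B2), the numerator vanishes identically, so $u'\equiv 0$ and $P$ is constant along every such curve.

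Next I would construct an integral curve that tends to the origin while remaining in $\Delta^*_{\epsilon_0}$, distinguishing the same three cases as in Lemma \ref{Al3} according to $k$. For $k=0$ (leading term the nonzero constant $b$) I take the solution with $\gamma(0)=0$, which immediately enters $\Delta^*_{\epsilon_0}$ because $\gamma'(0)=b\neq 0$, and read it as $t\downarrow 0$. For $k=1$ with $\mathrm{Re}(b)\neq 0$ (the only admissible subcase, since $k=1,\ \mathrm{Re}(b)=0$ is excluded) the origin is an asymptotically stable, resp. unstable, equilibrium of the linear model $\frac{dz}{dt}=bz$, so by the trajectory equivalence of \cite[Theorem 5]{Co} there is a solution with $\gamma(t)\to 0$ as $t\to+\infty$ (using backward time when $\mathrm{Re}(b)>0$). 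For $k\geq 2$ I invoke \cite[Theorem 5]{Co} to reduce to $\frac{dz}{dt}=bz^k$ and then \cite[Theorem 1]{S} to produce solutions $\gamma_j(t)\to 0$ as $t\to+\infty$, exactly as in Case $3$.

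In every admissible case I then combine the two facts: along the constructed curve $P(\gamma(t))$ equals a positive constant (it is positive because $\gamma(t)\in\Delta^*_{\epsilon_0}$ and $P>0$ there), while $\gamma(t)\to 0$ forces $P(\gamma(t))\to P(0)=0$ by continuity and flatness of $P$. This is the desired contradiction, whence $b=0$. I would stress that the conservation law $u'\equiv 0$ renders this step markedly cleaner than in Lemma \ref{Al3}, where only the boundedness of $u'$ was available and a comparison such as $P(\gamma(t))\geq |\gamma(t)|^{2A}$ had to be extracted; here flatness is violated outright.

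The main obstacle I anticipate is the same one underlying Lemma \ref{Al3}: verifying that the cited results \cite{Co,S} genuinely apply to the $\mathcal{C}^1$ field $F(z)=bz^k+g(z)$ so as to guarantee an origin-approaching trajectory inside $\Delta^*_{\epsilon_0}$, together with the bookkeeping needed to keep the trajectory in the punctured disc and away from $0$ over the relevant time range. The excluded case $k=1,\ \mathrm{Re}(b)=0$ is precisely where this construction breaks down, since the model $\frac{dz}{dt}=bz$ with purely imaginary $b$ has periodic orbits that neither approach nor leave the origin, so no contradiction with flatness can be forced — consistent with $b\neq 0$ being genuinely possible there.
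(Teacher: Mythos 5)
Your proposal is correct and follows essentially the route the paper intends: the paper gives no separate proof of Lemma~\ref{cor3}, saying only that it follows the proof of Lemma~\ref{Al3}, and you reuse exactly that scheme (flow the $\mathcal{C}^1$ field $bz^k+g$, invoke \cite{Co} and \cite{S} to produce an origin-approaching trajectory in $\Delta^*_{\epsilon_0}$, and contradict the flatness/positivity of $P$). Your observation that (B2) makes $P\circ\gamma$ exactly constant along the flow — so the contradiction is immediate, without the growth estimates $|u(t)|\lesssim t$ or $P(\gamma(t))\geq|\gamma(t)|^{2A}$ needed in Lemma~\ref{Al3} — is a genuine and valid simplification, and your account of why the excluded case $k=1$, $\mathrm{Re}(b)=0$ cannot be handled is accurate.
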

\section{The vetor space of tangential holomorphic vetor fields}
This section is devoted to the proof of Theorem \ref{Theorem3}. First of all, we need the following theorem.
\begin{theorem} \label{Th1}
If a holomorphic vector field germ $(H,0)$ vanishing at the origin which contains no nonzero term $i\beta z_2\frac{\partial}{\partial z_2}~(\beta\in \mathbb R^*)$ and is tangent to
a real $\mathcal{C}^1$-smooth hypersurface germ $(M,0)$ defined by the equation
$\rho(z) := \rho(z_1,z_2)=\mathrm{Re}~z_1+P(z_2)+ \mathrm{Im}~z_1 Q(z_2,\mathrm{Im}~z_1)=0$
satisfying the conditions: 
\begin{itemize}
\item[(1)] $P,Q$ are $\mathcal{C}^1$-smooth with $P(0)=Q(0,0)=0$,
\item[(2)] $P(z_2)>0$ for any $z_2 \not= 0$, and 
\item[(3)] $P(z_2),P'(z_2)$ are flat at $z_2=0$,
\end{itemize}
then $H=0$.
\end{theorem}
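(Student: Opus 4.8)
The plan is to convert tangency into an explicit scalar identity on $M$, use the flatness of $P$ and $P'$ to annihilate the purely ``$z_2$-holomorphic'' pieces, and then feed the surviving relations into Lemma \ref{Al3} and Lemma \ref{cor3}. First I would write $H = h_1\frac{\partial}{\partial z_1} + h_2\frac{\partial}{\partial z_2}$ with $h_1,h_2$ holomorphic germs vanishing at $0$, and record that tangency is equivalent to $\mathrm{Re}(H\rho)=0$ on $M$, where
\[
H\rho = h_1\Big(\tfrac12 - \tfrac{i}{2}(Q+\mathrm{Im}\,z_1\,Q_t)\Big) + h_2\big(P' + \mathrm{Im}\,z_1\,Q_{z_2}\big).
\]
Parametrizing $M$ by $(z_2,y_1)$ via $z_1 = -P(z_2) - y_1 Q(z_2,y_1) + i y_1$ turns this into an identity in $(z_2,y_1)$. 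Restricting to the slice $y_1=0$, where $z_1=-P(z_2)$ is real and flat and only $Q_0(z_2):=Q(z_2,0)=O(|z_2|)$ survives, yields
\[
\mathrm{Re}\Big[h_1(-P,z_2)\big(\tfrac12-\tfrac{i}{2}Q_0\big) + h_2(-P,z_2)\,P'\Big]=0.
\]

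The reduction then proceeds in stages. Writing $h_1(-P,z_2)=h_1(0,z_2)+O(P)$ with the $O(P)$-term flat, the only non-flat contribution is the harmonic polynomial $\tfrac12\,\mathrm{Re}\,h_1(0,z_2)$; since a nonzero harmonic polynomial cannot vanish to infinite order, this forces $h_1(0,z_2)\equiv 0$, i.e.\ $h_1=z_1\hat h_1$. A short direct computation of $\mathrm{Re}(H\rho)$ for $H=a_{10}z_1\frac{\partial}{\partial z_1}$ (using the term linear in $y_1$ along $z_2=0$ together with the $-P$ term along $y_1=0$) shows such a field is tangent only if $a_{10}=0$, so the $z_1\frac{\partial}{\partial z_1}$ coefficient vanishes and the order $m_0$ of $\hat h_1(0,\cdot)$ is at least $1$. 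Substituting $h_1=z_1\hat h_1$ and dividing the slice identity by $P>0$ exposes the logarithmic-derivative structure
\[
\mathrm{Re}\Big[-\tfrac12\,\hat h_1(0,z_2) + h_2(0,z_2)\,\tfrac{P'}{P} + g_1(z_2)\Big]=g_2(z_2),
\]
where $g_1=O(P')$ is flat and $g_2=o(|z_2|^{m_0})$ collects the higher-order polynomial tails.

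This is exactly hypothesis (A2) of Lemma \ref{Al3} with $n=0$, $a=-\tfrac12 a_{1m_0}$, $m=m_0$, $b=b_{0\ell_0}$ and $\ell=\ell_0$, where $\ell_0$ is the order of $h_2(0,\cdot)$ and $h_2(0,z_2)=b_{0\ell_0}z_2^{\ell_0}(1+O(|z_2|))$. The lemma then gives $ab=0$, impossible when both leading coefficients are nonzero, unless we sit in an exceptional case. Case (E2) is excluded by $m_0\ge 1$, and case (E1), namely $\ell_0=1$ with $\mathrm{Re}\,b_{0\ell_0}=0$, is excluded precisely by the hypothesis that $H$ contains no term $i\beta z_2\frac{\partial}{\partial z_2}$, which forces $b_{01}\in\mathbb{R}$ and hence $\mathrm{Re}\,b_{01}\ne 0$ when $\ell_0=1$. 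Thus at least one of $\hat h_1(0,\cdot)$, $h_2(0,\cdot)$ vanishes identically; the surviving one is then killed by returning to the undivided identity, which takes the form (B2) of Lemma \ref{cor3}. To propagate the vanishing through the whole Taylor series I would differentiate the full $(z_2,y_1)$-identity in $y_1$ at $y_1=0$ and induct on the order of vanishing in $z_1$, producing at each stage an identity of type (A2)/(B2) for the next pair of coefficients of $h_1,h_2$; once all of them vanish we get $H\equiv 0$.

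The main obstacle is the bookkeeping in this induction: one must organize the $y_1$-expansion so that at every stage the accumulated $Q$-contributions, together with the lower-order terms already shown to vanish, collapse exactly into the admissible error functions $g_0=O(|z|)$, $g_1=O(|z|^\ell)$, $g_2=o(|z|^m)$ demanded by (A1), and one must keep track of which locus (the $y_1=0$ slice versus the $z_2=0$ axis) is needed to exclude each exceptional configuration. The reductions to (A2) and (B2) are where flatness does the decisive work, so the delicate point throughout is guaranteeing that every correction genuinely lands in the flat or higher-order remainder rather than interfering with the leading coefficient being tested.
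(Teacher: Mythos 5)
Your overall skeleton --- parametrize $M$, extract a scalar identity, feed the leading coefficients of $h_1,h_2$ into Lemma \ref{Al3} and Lemma \ref{cor3}, and use the absence of a term $i\beta z_2\frac{\partial}{\partial z_2}$ to exclude the exceptional case (E1) --- is the same as the paper's, and your treatment of the bottom level ($h_1(0,\cdot)$, the $z_1$-linear part of $h_1$, and $h_2(0,\cdot)$) is essentially sound, modulo small inaccuracies (the term $h_1(0,z_2)Q_0(z_2)$ is only of higher order, not flat, and the residual identity you hand to Lemma \ref{cor3} has a flat, not identically zero, right-hand side). The genuine gap is the step that is supposed to reach the coefficients $a_{jk}$ with $j\ge 2$ and $b_{jk}$ with $j\ge1$. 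You propose to differentiate the tangency identity repeatedly in $y_1$ and induct; but $Q$ is only $\mathcal{C}^1$, so that identity can be differentiated at most once in $y_1$, and the higher coefficients are unreachable this way. The slice $y_1=0$ alone does not save you either: there $z_1=-P(z_2)$ is \emph{real}, so $z_1^jz_2^k$ contributes $(-1)^jP^j(z_2)z_2^k$ and every coefficient enters Lemma \ref{Al3} multiplied only by the real number $(-1)^j$. At stage $j$ you therefore land irretrievably in the exceptional cases: (E2) whenever $k=0$ and $\mathrm{Re}\,a_{j0}=0$ (the imaginary part of $a_{j0}$ is invisible on this slice), and (E1) whenever the leading term of $h_2$ is $b_{j1}z_1^jz_2$ with $j\ge1$ and $\mathrm{Re}\,b_{j1}=0$ --- a configuration your hypothesis on $H$ does not exclude, since it only forbids $j=0$. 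The lemmas then give no conclusion and the induction stalls.

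The paper's mechanism for precisely this difficulty is the substitution $t=\alpha P(z_2)$ with a free parameter $\alpha\in\mathbb R$: then $z_1=(i\alpha-1)P(z_2)-\alpha P(z_2)Q$, each monomial $z_1^jz_2^k$ contributes $(i\alpha-1)^jP^j(z_2)\big(z_2^k+o(|z_2|^k)\big)$, the whole Taylor series in $z_1$ collapses at once onto the leading pairs $(j_0,k_0)$ and $(m_0,n_0)$ (so no induction and no higher $y_1$-derivatives are needed), and the rotation $(i\alpha-1)^j$ lets one choose $\alpha$ so that $\mathrm{Re}\big(a_{j_00}(i\alpha-1)^{j_0}\big)\ne0$ and $\mathrm{Re}\big(b_{m_0n_0}(i\alpha-1)^{m_0}\big)\ne0$ when $m_0\ge1$. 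This escapes (E1) and (E2) in all cases except $m_0=0$, $n_0=1$, $b_{01}\in i\mathbb R^*$, which is exactly what the hypothesis forbids. Without this, or an equivalent device for rotating the coefficients, your argument does not close.
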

\begin{proof}
The CR hypersurface germ $(M,0)$ at the origin in $\CC^2$ under consideration is defined by the equation $\rho(z_1, z_2) = 0$, where
$$
\rho (z_1, z_2) = \mathrm{Re}~z_1 + P(z_2) + (\mathrm{Im}~z_1)\ Q(z_2, \mathrm{Im}~z_1) = 0,
$$
where $P, Q$ are $\mathcal{C}^1$-smooth functions satisfying the three conditions specified in the hypothesis of our lemma.  Recall that $P(z_2),P'(z_2)$ are flat at $z_2=0$ in particular.

Then we consider a holomorphic vector field $H=h_1(z_1,z_2)\frac{\partial}{\partial z_1}+h_2(z_1,z_2)\frac{\partial}{\partial z_2}$ defined on a neighborhood of the origin satisfying that $H(0)=0$ and that $H$ contains no nonzero term $i\beta z_2\frac{\partial}{\partial z_2}~(\beta\in \mathbb R^*)$. We only consider $H$ that is tangent to $M$, which means that they satisfy the identity
\begin{equation}\label{eq221}
(\mathrm{Re}~ H) \rho(z)=0,\; \forall z \in M.
\end{equation}

Expand $h_1$ and $h_2$ into the Taylor series at the origin so that
$$
h_1(z_1,z_2)=\sum\limits_{j,k=0}^\infty a_{jk} z_1^j z_2^k
\text{ and }
h_2(z_1,z_2)=\sum\limits_{j,k=0}^\infty b_{jk} z_1^jz_2^k,
$$
where $a_{jk}, b_{jk}\in \mathbb C$. We note that $a_{00}=b_{00}=0$ since $h_1(0,0)=h_2(0,0)=0$.

By a simple computation, we have 
\begin{equation*}
\begin{split}
 \rho_{z_1}(z_1,z_2)&= \frac{1}{2}+\frac{Q(z_2, \mathrm{Im}~z_1)}{2i}+(\mathrm{Im}~z_1)Q_{z_1}(z_2, \mathrm{Im}~z_1),\\
\rho_{z_2}(z_1,z_2)&= P'(z_2)+(\text{Im}~z_1) Q_{z_2}(z_2, \text{Im}~z_1),
\end{split}
\end{equation*}
and the equation (\ref{eq221}) can thus be re-written as
\begin{equation}\label{eq222}
\begin{split}
&\mathrm{Re} \Big[\Big( \frac{1}{2}+\frac{Q(z_2, \mathrm{Im}~z_1)}{2i}+(\mathrm{Im}~z_1)Q_{z_1}(z_2, \mathrm{Im}~z_1)\Big)h_1(z_1,z_2)\\
&\quad +\Big(P'(z_2)+(\text{Im}~z_1)Q_{z_2}(z_2, \text{Im}~z_1)\Big) h_2(z_1,z_2)\Big ]=0
\end{split}
\end{equation}
for all $(z_1,z_2)\in M$.

Since $\Big(it-P(z_2)-tQ(z_2,t), z_2\Big)\in M$ for any $t \in \mathbb R$ with $t$ small enough, the above equation again admits a new form
\begin{equation}\label{eq223}
\begin{split}
&\mathrm{Re}\Big[ \Big(\frac{1}{2}+\frac{Q(z_2,t)}{2i}+tQ_{z_1}(z_2,t)\Big)\sum_{j,k=0}^\infty a_{jk}\big(it-P(z_2)-tQ(z_2,t)\big)^j z_2^k\\
&\quad +\Big(P'(z_2)+t {Q}_{z_2}(z_2,t)\Big) \sum_{m,n=0}^\infty b_{mn} \big(it-P(z_2)-tQ(z_2,t)\big)^m z_2^n\Big]=0
\end{split}
\end{equation}
for all $z_2\in \mathbb C$ and for all $t\in\mathbb R$ with $|z_2|<\epsilon_0$ and $|t|<\delta_0$, where $\epsilon_0>0$ and $\delta_0>0$ are small enough.

The goal is to show that $H\equiv 0$. Indeed, striving for a contradiction, suppose that $H\not\equiv 0$. We notice that if $h_2\equiv 0$ then (\ref{eq222}) shows that $h_1\equiv 0$. So, we must have  $h_2\not\equiv 0$.

We now divide the argument into two cases as follows.
\smallskip

\noindent
{\bf Case 1.} {\boldmath $h_1\not \equiv 0$.} In this case let us denote by $j_0$ the smallest integer such that $a_{j_0 k}\ne 0$ for some integer $k$. Then let $k_0$ be the smallest integer such that $a_{j_0 k_0}\ne 0$. Similarly, let $m_0$ be the smallest integer such that $b_{m_0 n}\ne 0$ for some integer $n$. Then denote by $n_0$ the smallest integer  such that $b_{m_0 n_0}\ne 0$. We can see that $j_0\geq 1$ if $k_0=0$ and $m_0\geq 1$ if $n_0=0$. 

Since $P(z_2)=o(|z_2|^j)$ for any $j\in \mathbb N$, inserting $t=\alpha P(z_2)$ into (\ref{eq223}), where $\alpha\in \mathbb R$ will be chosen later, one has
\begin{equation}\label{eq224}
\begin{split}
&\mathrm{Re} \Big[\frac{1}{2} a_{j_0k_0}(i\alpha -1)^{j_0}(P(z_2))^{j_0}\big(z_2^{k_0}+o(|z_2|^{k_0})\big)+ b_{m_0n_0}(i\alpha -1)^{m_0}(z_2^{n_0}+o(|z_2|^{n_0}) \\
&\quad \times (P(z_2))^{m_0}\Big(P'(z_2)+\alpha P(z_2)Q_{z_2}(z_2, \alpha P(z_2))\Big)   \Big ]=0
\end{split}
 \end{equation}
for all $z_2\in \Delta_{\epsilon_0}$.  We note that in the case $k_0=0$ and $\mathrm{Re}(a_{j_0 0})=0$, $\alpha$ can be chosen in such a way that $\mathrm{Re}\big( (i\alpha-1)^{j_0}a_{j_0 0}\big)\ne 0$. Then (\ref{eq224}) yields that $j_0>m_0$ by virtue of the fact that $P'(z_2), P(z_2)$ are flat at $z_2=0$. Hence, we conclude from Lemma \ref{Al3} that $m_0=0,n_0=1$, and $b_{0,1}=i\beta z_2$ for some $\beta\in \mathbb R^*$. This is a contradiction with the assumption $H$ contains no nonzero term $i\beta z_2\frac{\partial}{\partial z_2}$.

\medskip

\noindent
{\bf Case 2.} {\boldmath $h_1\equiv 0$.} Let $m_0,n_0$ be as in the Case 1. Since $P(z_2)=o(|z_2|^{n_0})$, letting $t=0$ in (\ref{eq223}) one obtains that
\begin{equation}\label{eq225}
\begin{split}
\mathrm{Re} \Big[b_{m_0n_0}\big(z_2^{n_0}+o(|z_2|^{n_0}\big) P'(z_2) \Big ]=0
\end{split}
 \end{equation}
for all $z_2\in \Delta_{\epsilon_0}$. Therefore, Lemma \ref{cor3} yields that $m_0=0,n_0=1$, and $b_{0,1}=i\beta z_2$ for some $\beta\in \mathbb R^*$, which is again impossible.

Altogether, the proof of our theorem is complete. 
\end{proof}

Now we are ready to prove Theorem \ref{Theorem3}. 
\begin{proof}[Proof of Theorem \ref{Theorem3}]
Let $H_1, H_2\in \mathrm{hol}_0(M,p) $ be arbitrary. Then by Theorem \ref{Th1} we have that $H_j$ contains term $i\beta_jz_2\frac{\partial}{\partial z_2}$ ($j=1,2$) for some $\beta_1,\beta_2\in \mathbb R$. Therefore, $\beta_2H_1-\beta_1 H_2$ does not contain a term $i\beta z_2\frac{\partial}{\partial z_2}$. Hence, Theorem \ref{Th1} again yields that $\beta_2H_1-\beta_1 H_2=0$, which proves the theorem.  
\end{proof}

\section{Appendix}

We recall the following theorem that gives examples of holomorphic vector fields and real hypersurfaces which are tangent.
\begin{theorem}[see Theorem 3 in \cite{Ninh}]\label{T2} Let $\alpha \in \mathbb R$ and let $a(z)= \sum_{n=1}^\infty a_n z^n $ be a non-zero holomorphic function defined on a neighborhood of $0\in \mathbb C$, where $a_n \in \mathbb C$ for all $n\geq 1$. Then there exist positive numbers $\epsilon_0,\delta_0>0$ such that the holomorphic vector field 
$$
H^{a,\alpha}(z_1,z_2)=L^\alpha(z_1) a(z_2)\frac{\partial }{\partial z_1}+i z_2\frac{\partial }{\partial z_2},
$$
where 
\[
L^\alpha (z_1)=
\begin{cases}
\frac{1}{\alpha}\big(\exp(\alpha z_1)-1\big)&\text{if}~ \alpha \ne 0\\
z_1 &\text{if}~ \alpha =0,
\end{cases}
\]
is tangent to the $\mathcal{C}^1$-smooth hypersurface $M$ given by
$$
M=\big\{ (z_1,z_2)\in \Delta_{\delta_0}\times \Delta_{\epsilon_0}:\rho(z_1,z_2):= \mathrm{Re}~z_1+P(z_2)+f(z_2,\mathrm{Im}~z_1) =0\big\},
$$
where $f$ and $P$ are respectively defined on $\Delta_{\epsilon_0}\times (-\delta_0,\delta_0)$ and $\Delta_{\epsilon_0}$ by
\[
 f(z_2,t)=\begin{cases}
 -\frac{1}{\alpha}\log \Big|\frac{\cos \big(R(z_2)+\alpha t\big)}{\cos (R(z_2))} \Big| &~\text{if}~ \alpha\ne 0\\
 \tan(R(z_2))t  &~\text{if}~ \alpha =0,
\end{cases} 
 \]
 where $R(z_2)=q(|z_2|)- \mathrm{Re}\big(\sum_{n=1}^\infty\frac{a_n}{n} z_2^n\big)$ for all $z_2\in \Delta_{\epsilon_0}$,
and
\begin{equation*}
\begin{split}
  P(z_2)=
 \begin{cases}
\frac{1}{\alpha} \log \Big[ 1+\alpha P_1(z_2)\Big]~&\text{if}~ \alpha \ne 0\\
 P_1(z_2) ~&\text{if}~ \alpha=0,
\end{cases}
\end{split}
\end{equation*}
where 
\begin{equation*}
\begin{split}
P_1(z_2)=\exp\Big[p(|z_2|)+\mathrm{Re}\Big(\sum_{n=1}^\infty  \frac{a_n}{in}z_2^n\Big ) -\log \big|\cos\big(R(z_2)\big)\big|   \Big]
\end{split}
\end{equation*}
for all $z_2\in \Delta_{\epsilon_0}^*$ and $P_1(0)=0$, and $q,p$ are reasonable functions defined on $[0,\epsilon_0)$ and $(0,\epsilon_0)$ respectively with $q(0)=0$ so that $P,R$ are $\mathcal{C}^1$-smooth in $\Delta_{\epsilon_0}$. 
\end{theorem}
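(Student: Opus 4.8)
The plan is to verify tangency by a direct computation, showing that $\mathrm{Re}\big(H^{a,\alpha}\rho\big)=0$ at every point of $M$. Writing $t=\mathrm{Im}~z_1$ and using the Wirtinger derivatives, I would first record
\begin{equation*}
\rho_{z_1}=\tfrac12\big(1-if_t(z_2,t)\big),\qquad \rho_{z_2}=P'(z_2)+f_{z_2}(z_2,t),
\end{equation*}
so that $H^{a,\alpha}\rho=\tfrac12 L^\alpha(z_1)a(z_2)\big(1-if_t\big)+iz_2\big(P'+f_{z_2}\big)$. I will treat $\alpha\ne 0$ in detail; the case $\alpha=0$ follows by the same scheme (or by letting $\alpha\to 0$).

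Two simplifications drive the argument. First, differentiating $f$ gives $f_t=\tan\big(R(z_2)+\alpha t\big)$, whence the clean trigonometric identity
\begin{equation*}
1-if_t=\frac{\cos(R+\alpha t)-i\sin(R+\alpha t)}{\cos(R+\alpha t)}=\frac{e^{-i(R+\alpha t)}}{\cos(R+\alpha t)}.
\end{equation*}
Second, the rotational field $iz_2\tfrac{\partial}{\partial z_2}$ interacts cleanly with the building blocks of $\rho$: since $R_{z_2}=q'(|z_2|)\tfrac{\bar z_2}{2|z_2|}-\tfrac12\tfrac{a(z_2)}{z_2}$, one gets $iz_2R_{z_2}=\tfrac{i}{2}\big(|z_2|q'(|z_2|)-a(z_2)\big)$, and likewise $iz_2\,\partial_{z_2}\log P_1=\tfrac{i}{2}|z_2|p'(|z_2|)+\tfrac12 a(z_2)+\tfrac{i}{2}\tan(R)\big(|z_2|q'(|z_2|)-a(z_2)\big)$, using $\partial_{z_2}\mathrm{Im}\,\Psi=\tfrac{1}{2i}a(z_2)/z_2$ with $\Psi(z_2)=\sum\frac{a_n}{n}z_2^n$.

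The decisive step uses the defining equation of $M$ to eliminate $z_1$. On $M$ one has $z_1=-(P+f)+it$, and the explicit formulas for $P$ and $f$ give $e^{-\alpha P}=(1+\alpha P_1)^{-1}$ and $e^{-\alpha f}=\cos(R+\alpha t)/\cos R$ (the cosines being positive near the origin), hence
\begin{equation*}
e^{\alpha z_1}=\frac{1}{1+\alpha P_1}\,\frac{\cos(R+\alpha t)}{\cos R}\,e^{i\alpha t}.
\end{equation*}
Substituting this into $L^\alpha(z_1)=\tfrac1\alpha(e^{\alpha z_1}-1)$ and combining with the identity for $1-if_t$ collapses the first summand of $H^{a,\alpha}\rho$ to $\tfrac{a}{2\alpha}\big[(1+\alpha P_1)^{-1}(1-i\tan R)-1+i\tan(R+\alpha t)\big]$, with all $z_1$-dependence gone in favor of $R$, $t$, and $P_1$.

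Finally I would take real parts of the three summands and add them. Writing $a=a_R+ia_I$, $T_1=\tan R$, $T_2=\tan(R+\alpha t)$, the contributions carrying $|z_2|q'(|z_2|)$ and $|z_2|p'(|z_2|)$ vanish automatically, being real and entering multiplied by $i$; this is exactly why the precise choice of the auxiliary functions $q,p$ never enters. The surviving terms assemble the single quantity $a_R+T_1a_I$ with coefficient $\tfrac{1}{2\alpha(1+\alpha P_1)}$ from the first summand and $\tfrac{P_1}{2(1+\alpha P_1)}$ from $\mathrm{Re}(iz_2P')$, and these merge via $\tfrac1\alpha+P_1=\tfrac{1+\alpha P_1}{\alpha}$ into $\tfrac{1}{2\alpha}(a_R+T_1a_I)$; the leftover $T_1,T_2$ terms then cancel against $\mathrm{Re}(iz_2f_{z_2})=\tfrac{1}{2\alpha}(T_2-T_1)a_I$, giving $\mathrm{Re}(H^{a,\alpha}\rho)=0$ on $M$. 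I expect the main obstacle to be precisely this bookkeeping: the cancellation is exact but only becomes transparent after the substitution for $e^{\alpha z_1}$ and the regrouping of the $(1+\alpha P_1)^{-1}$ terms, so the real work is to arrange the algebra so that nothing is dropped and the role of $P_1(0)=0$ and of the specific form of $R$ remains visible throughout.
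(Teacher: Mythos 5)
Your proposal is correct and follows essentially the same route as the paper: a direct verification that $\mathrm{Re}(H^{a,\alpha}\rho)=0$ on $M$, using the substitution $\mathrm{Re}\,z_1=-(P+f)$ to eliminate $z_1$ and the trigonometric collapse $(1-if_t)e^{\alpha(it-f)}\cdot e^{\,?}$ reducing everything to $\tan R$, $\tan(R+\alpha t)$, and $P_1$ (the paper packages this as its identities (i)--(v), in particular (iv), while you carry the real-part bookkeeping explicitly in terms of $a_R$, $a_I$, $T_1$, $T_2$). The cancellations you describe check out, so no gap.
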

\begin{proof}
First of all, it is easy to show that there is $\epsilon_0>0$ such that we can choose a function $q$ so that the function $R$ defined as in the theorem is $\mathcal{C}^1$-smooth and $|R(z_2)|\leq 1$ on $\Delta_{\epsilon_0}$. Choose $\delta_0=\frac{1}{2|\alpha|}$ if $\alpha\ne 0$ and  $\delta_0=+\infty$ if otherwise. Then the function $ f(z_2,t)$ given in the theorem is $\mathcal{C}^1$-smooth on $\Delta_{\epsilon_0}\times (-\delta_0,\delta_0)$. Moreover, $ f(z_2,t)$ is real analytic in $t$ and $\frac{\partial^mf}{\partial t^m}$ is $\mathcal{C}^1$-smooth on $\Delta_{\epsilon_0}\times (-\delta_0,\delta_0)$ for each $m\in \mathbb N$.

Next, let $P_1, P, R$ be functions defined as in the theorem and let $Q_0(z_2):=\tan (R(z_2))$ for all $z_2 \in \Delta_{\epsilon_0}$. By a direct computation, we have the following equations.
\begin{itemize}
\item[(i)] $\mathrm{Re}\Big[i  z_2 {Q_0}_{z_2}(z_2)+\frac{1}{2}\Big(1+Q_0^2(z_2)\Big)i a(z_2)\Big]\equiv 0$;
\item[(ii)] $\mathrm{Re}\Big[i z_2 {P_1}_{z_2}(z_2)-\Big(\frac{1}{2}+\frac{Q_0(z_2)}{2i}\Big)a(z_2)P_1(z_2)\Big]\equiv 0$;
\item[(iii)] $\mathrm{Re}\Big[i z_2 {P}_{z_2}(z_2)+\frac{\exp\big(-\alpha P(z_2)\big)-1}{\alpha}\Big(\frac{1}{2}+\frac{Q_0(z_2)}{2i}\Big) a(z_2)\Big]\equiv 0 $ for $\alpha\ne 0$;
\item[(iv)] $\Big(i+f_t(z_2,t\Big)\exp\Big(\alpha\big(i t-f(z_2,t)\big)\Big)\equiv i+Q_0(z_2)$;
\item[(v)] $\mathrm{Re}\Big[2i\alpha z_2 f_{z_2}(z_2,t)+\Big(f_t(z_2,t)-Q_0(z_2)\Big) ia(z_2)\Big]\equiv 0$
\end{itemize}
on $\Delta_{\epsilon_0}$ for any $t\in (-\delta_0,\delta_0)$.

We now prove that the holomorphic vector field $H^{a,\alpha}$ is tangent to the hypersurface $M$. Indeed, by a calculation we get
\begin{equation*}
\begin{split}
 \rho_{z_1}(z_1,z_2)&=\frac{1}{2}+\frac{f_t(z_2,\mathrm{Im}~z_1 )}{2i},\\
 \rho_{z_2}(z_1,z_2)&=P_{z_2}(z_2)+f_{z_2}(z_2,\mathrm{Im}~z_1). 
 \end{split}
 \end{equation*}
 
We divide the proof into two cases.
  
\noindent 
 {\bf a) $\alpha =0$.} In this case, $f(z_2,t)=Q_0(z_2)t$ for all $(z_2,t)\in \Delta_{\epsilon_0}\times (-\delta_0,\delta_0)$. Therefore, by $\mathrm{(i)}$ and $\mathrm{(ii)}$ one obtains that
 \begin{equation*}
 \begin{split}
\mathrm{Re}~H^{a,\alpha}(\rho(z_1,z_2))&=\mathrm{Re}\Big[\Big(\frac{1}{2}+\frac{Q_0(z_2)}{2i}\Big) z_1a(z_2)
+\Big({P_1}_{z_2}(z_2)+(\mathrm{Im}~z_1){Q_0}_{z_2}(z_2)\Big) i z_2\Big]\\
&= \mathrm{Re}\Big[\Big(\frac{1}{2}+\frac{Q_0(z_2)}{2i}\Big) \Big(i (\mathrm{Im}~z_1)-P_1(z_2)-
(\mathrm{Im}~z_1) Q_0(z_2)\Big)a(z_2)\\
&\quad\quad +\Big({P_1}_{z_2}(z_2)+(\mathrm{Im}~z_1){Q_0}_{z_2}(z_2)\Big) i z_2\Big]\\
&=\mathrm{Re}\Big[i z_2 {P_1}_{z_2}(z_2) -\Big(\frac{1}{2}+\frac{Q_0(z_2)}{2i}\Big)a(z_2) P_1(z_2)\Big]\\
&\quad \quad+(\mathrm{Im}~z_1)\mathrm{Re}\Big[i z_2 {Q_0}_{z_2}(z_2) +\frac{1}{2}\Big(1+Q_0(z_2)^2\Big)ia(z_2)\Big]=0
\end{split}
 \end{equation*}
 for every $(z_1,z_2)\in M$, which proves the theorem for $\alpha =0$.
   
\noindent
{\bf b) $\alpha\ne 0$.} It follows from $\mathrm{(iii)}$, $\mathrm{(iv)}$, and $\mathrm{(v)}$ that
\begin{equation*}
\begin{split}
&\mathrm{Re}~H^{a,\alpha}(\rho(z_1,z_2))\\
&=\mathrm{Re}\Big[\Big(\frac{1}{2}+\frac{f_t(z_2,\mathrm{Im}~z_1)}{2i}\Big)L(z_1)a(z_2)
+\Big(P_{z_2}(z_2)+f_{z_2}(z_2,\mathrm{Im}~z_1)\Big) i z_2\Big]\\
&=\mathrm{Re}\Big[\Big(\frac{1}{2}+\frac{f_t(z_2,\mathrm{Im}~z_1)}{2i}\Big)\frac{1}{\alpha}\Big(\exp\Big(\alpha\big(i\mathrm{Im}~z_1-P(z_2)-
f(z_2,\mathrm{Im}~z_1)\big)\Big)-1 \Big)a(z_2) \\
&\quad +\Big(P_{z_2}(z_2)+f_{z_2}(z_2,\mathrm{Im}~z_1)\Big) i z_2\Big]\\
&=\mathrm{Re}\Big[\frac{1}{\alpha}\frac{i+f_t(z_2,\mathrm{Im}~z_1)}{2i}\exp\Big(\alpha\big(i\mathrm{Im}~z_1-
f(z_2,\mathrm{Im}~z_1)\big)\Big)\exp(-\alpha P(z_2) )a(z_2) \\
&\quad - \frac{1}{\alpha}\Big(\frac{1}{2}+\frac{f_t(z_2,\mathrm{Im}~z_1)}{2i}\Big)a(z_2) +\Big(P_{z_2}(z_2)+f_{z_2}(z_2,\mathrm{Im}~z_1)\Big) i z_2\Big]\\
&=\mathrm{Re}\Big[\frac{1}{\alpha}\frac{i+Q_0(z_2)}{2i}\exp(-\alpha P(z_2) )a(z_2)- \frac{1}{\alpha}\Big(\frac{1}{2}+\frac{f_t(z_2,\mathrm{Im}~z_1)}{2i}\Big)a(z_2) \\
&\quad  +\Big(P_{z_2}(z_2)+f_{z_2}(z_2,\mathrm{Im}~z_1)\Big) i z_2\Big]\\
&=\mathrm{Re}\Big[i z_2P_{z_2}(z_2)+\Big(\frac{1}{2}+\frac{Q_0(z_2)}{2i}\Big)\frac {\exp(-\alpha P(z_2))-1}{\alpha}a(z_2) \Big]\\
&\quad+ \mathrm{Re}\Big[ i z_2 f_{z_2}(z_2,\mathrm{Im}~z_1)+ \frac{1}{2 \alpha}\Big(f_t(z_2,\mathrm{Im}~z_1)-Q_0(z_2)\Big)ia(z_2)\Big]=0
\end{split}
\end{equation*}
 for every $(z_1,z_2)\in M$, which ends the proof. 
\end{proof}

\end{document}